\newtheorem{theorem}{Theorem}[section]
\newtheorem{definition}[theorem]{Definition}
\newtheorem{remark}[theorem]{Remark}
\newtheorem{question}[theorem]{Question}
\begin{document}

  \title[The Baire classification of strongly separately continuous functions]%
{The Baire classification of strongly separately continuous functions on $\ell_\infty$}
\author[Olena Karlova \and Tom\'{a}\v{s} Visnyai]%
{Olena Karlova* \and Tom\'{a}\v{s} Visnyai**}

\newcommand{\acr}{\newline\indent}

\address{\llap{*\,} Chernivtsi National University,\acr
                   Faculty of Mathematics and Informatics,\acr
                   Department of Mathematical Analysis,\acr
                   Kotsyubyns'koho 2, 58 012 Chernivtsi,\acr
                   Ukraine}
\email{maslenizza.ua@gmail.com}

\address{\llap{**\,} Institute of Information Engineering, Automation and Mathematics, \acr
                   Faculty of Chemical and Food Technology, \acr
                   Slovak University of Technology in Bratislava,\acr
                   Radlinskeho 9, 812 37 Bratislava,\acr
                   Slovak Republic}
\email{tomas.visnyai@stuba.sk}

\subjclass[2010]{Primary 54C08, 54C30; Secondary  26B05}
\keywords{strongly separately continuous function; Baire classification}

\maketitle

\begin{abstract}
We prove that for any $\alpha\in[0,\omega_1)$ there exists a strongly separately continuous function $f:\ell_\infty\to [0,1]$ such that $f$ belongs to the $(\alpha+1)$'th /$(\alpha+2)$'th/ Baire class and does not belong to the $\alpha$'th Baire class  if $\alpha$ is finite /infinite/.
\end{abstract}

\section{Introduction}\label{sec:intro}
The notion of real-valued strongly separately continuous  function defined on $\mathbb R^n$ was introduced and studied by  Dzagnidze in his paper \cite{Dzagnidze}. He proved that the class of all strongly separately continuous real-valued functions on $\mathbb R^n$ coincides with the class of all continuous functions. Later,  \v{C}in\v{c}ura,  \v{S}al\'{a}t and  Visnyai \cite{CSV} considered strongly separately continuous functions defined on the Hilbert space $\ell_2$ of sequences $x=(x_n)_{n=1}^\infty$ of real numbers such that $\sum\limits_{n=1}^\infty x_n^2<+\infty$ and showed that there are
essential differences between some properties of strongly separately continuous functions defined on $\ell_2$ and   the corresponding properties of functions on $\mathbb R^n$. In particular, they noticed that there exists a strongly separately continuous function $f:\ell_2\to \mathbb R$ which does not belong to the first Baire class. Extending these results, Visnya~\cite{TV} constructed a strongly separately continuous function $f:\ell_2\to \mathbb R$ of the third Baire class which is   not quasi-continuous at every point of $\ell_2$. It was shown recently in~\cite{KV:JMAA} that   for every $2\le \alpha<\omega$ there exists a strongly separately continuous function $f:\ell_p\to\mathbb R$ which belongs the $\alpha$'th Baire class and does not belong to the $\beta$'th Baire class on $\ell_p$ for $\beta<\alpha$, where $p\in[1,+\infty)$.

The aim of this paper is to generalize results from~\cite{KV:JMAA}  to the case of $p=+\infty$. We develop arguments from~\cite{EHS} and prove that for any $\alpha\in[0,\omega_1)$ there exists a strongly separately continuous function $f:\ell_\infty\to [0,1]$ such that $f$ belongs to the $(\alpha+1)$'th /$(\alpha+2)$'th/ Baire class and does not belong to the $\alpha$'th Baire class  if $\alpha$ is finite /infinite/.

\section{Definitions and notations}
Let $\ell_\infty$ be the Banach space of all bounded sequences of reals with the norm
$$\|x\|_\infty=\sup_{k\in\omega}|x_k|$$ for all $x=(x_k)_{k\in \omega}\in\ell_\infty$. For  $x,y\in\ell_\infty$ we denote
$ d_\infty(x,y)=\|x-y\|_\infty$.
 If $x\in \ell_\infty$  and $\delta>0$, then
$$
B_\infty(x,\delta)=\{y\in\ell_\infty:\|x-y\|_\infty<\delta\}..
$$

\begin{definition}\label{df:df01}{\rm
Let   $x^0=(x_k^0)_{k\in\omega}\in \ell_\infty$ and $(Y,|\cdot-\cdot|)$ be a metric space. A function $f:\ell_\infty\to Y$ is said to be
 \emph{strongly separately continuous at  $x^0$  with respect to the $k$-th variable} if
\begin{gather}\label{gath:cond_ssc}
 \forall \varepsilon>0 \quad\exists \delta>0 \quad\forall  x=(x_k)_{k\in\omega}\in B_\infty(x^0,\delta)\nonumber\\
|f(x_1,\dots,x_{k},\dots)-f(x_1,\dots,x_{k-1},x_k^0,x_{k+1},\dots)|< \varepsilon.
\end{gather}
 If ${f}$ is strongly separately continuous at $x^0$ with respect to each variable, then ${f}$ is said to be  \emph{strongly separately continuous at $x^0$}. Moreover,  ${f}$ is \emph{strongly separately continuous on $\ell_\infty$} if it is strongly separately continuous at each point of $\ell_\infty$.}
\end{definition}

Strongly separately continuous functions we will also call {\it ssc functions} for short.

\begin{definition}{\rm
A subset $A\subseteq X$ of a Cartesian product $X=\prod_{k=1}^\infty X_k$ of sets $X_1,X_2,\dots$ is called {\it $\mathcal S$-open}~\cite{KaRAEX:2015}, if
\begin{gather*}
 \sigma_1(a)=\{(x_k)_{k=1}^\infty\in X: |\{k:x_k\ne a_k\}|\le 1\}\subseteq A
\end{gather*}
for all $a=(a_k)_{k=1}^\infty\in A$.}
\end{definition}

If $x\in\ell_\infty$ and $N\subseteq\omega$, then we put
$$
\pi_N(x)=(x_k)_{k\in N}.
$$
In the case $N=\{n\}$, we write $\pi_n(x)$ instead of $\pi_{\{n\}}(x)$.

\section{Main result}

Define a function $(\alpha)^{\bullet}$ as the following
\begin{gather}
  (\alpha)^{\bullet}=\left\{\begin{array}{ll}
                              \alpha, & \alpha\in[0,\omega), \\
                              \alpha+1, &\alpha\in[\omega,\omega_1).
                            \end{array}   \right.
\end{gather}

\begin{theorem}\label{thm:BaireClass}
  For any $\alpha\in [0,\omega_1)$ there exists a strongly separately continuous function $f:\ell_\infty\to [0,1]$ which belongs to the $(\alpha+1)^\bullet$'th Baire class and does not belong to the $\alpha$'th Baire class on $\ell_\infty$.
\end{theorem}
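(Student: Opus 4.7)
\smallskip
\noindent\textbf{Plan of proof.}
My plan is to factor $f$ as a composition $f=g\circ L$, where $L\colon\ell_\infty\to[0,1]$ is a continuous, tail-invariant surjection that admits a continuous right inverse, and $g\colon[0,1]\to[0,1]$ carries prescribed Baire complexity coming from the classical Lebesgue--Hausdorff hierarchy. Strong separate continuity of $f$ will be built into the tail invariance of $L$, and the Baire class bounds will be inherited from those of $g$ via continuity of $L$ and its section.

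First I would set $L(x)=\varphi(\limsup_{k\to\infty}x_k)$, where $\varphi\colon\mathbb{R}\to(0,1)$ is a fixed homeomorphism, for instance $\varphi(t)=\tfrac12+\tfrac1\pi\arctan t$. The map $x\mapsto\limsup_k x_k$ is $1$-Lipschitz on $(\ell_\infty,\|\cdot\|_\infty)$: indeed $x_k\le y_k+\|x-y\|_\infty$ for every $k$ yields $\limsup_k x_k\le\limsup_k y_k+\|x-y\|_\infty$, and symmetry gives $|\limsup_k x_k-\limsup_k y_k|\le\|x-y\|_\infty$. Hence $L$ is continuous. Modifying finitely many entries of a sequence does not affect its $\limsup$, so $L$ is invariant under finite coordinate changes. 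A continuous right inverse is provided by the constant-sequence map $s(t)=(\varphi^{-1}(t),\varphi^{-1}(t),\dots)$ for $t\in(0,1)$, which satisfies $L\circ s=\operatorname{id}_{(0,1)}$.

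Next I would invoke the Lebesgue--Hausdorff hierarchy theorem to pick $g\colon[0,1]\to[0,1]$ whose restriction to $(0,1)$ is of Baire class $\alpha+1$ but not of Baire class $\alpha$, and define $f=g\circ L$. Strong separate continuity of $f$ is then transparent: for any $x^0\in\ell_\infty$, any $k\in\omega$, and any $x\in\ell_\infty$, the point $x$ and the point obtained by replacing $x_k$ with $x^0_k$ differ in a single coordinate, hence have the same $\limsup$, hence the same value under $L$ and under $f$; so the left side of \eqref{gath:cond_ssc} is $0$ for every $\delta>0$. Continuity of $L$ yields the upper bound: $f=g\circ L$ is of Baire class at most $\alpha+1\le(\alpha+1)^\bullet$. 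For the lower bound, if $f$ were of Baire class $\alpha$ then $g=f\circ s$ would inherit that class through the continuous section $s$, contradicting the choice of $g$.

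The main technical points I expect to address are the verification that $\limsup$ is actually continuous on all of $\ell_\infty$ (not merely Baire $1$, as is the case on more familiar sequence spaces)---handled by the $1$-Lipschitz estimate above---and the Lebesgue--Hausdorff style existence of $g$ on $(0,1)$ (standard, since $(0,1)$ is an uncountable Polish space). A minor observation is that the bound $(\alpha+1)^\bullet$ in the theorem is not tight for infinite $\alpha$: the construction actually yields a function of class $\alpha+1$, which still lies inside $(\alpha+1)^\bullet=\alpha+2$.
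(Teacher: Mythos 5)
Your proof is correct, and it takes a genuinely different route from the paper's. The paper constructs, by transfinite recursion, a hierarchy of $\mathcal S$-open sets $A_\alpha\subseteq\ell_\infty$ of exact additive Borel class $\alpha$ (non-membership in the dual class being forced by a contraction/fixed-point argument in the style of Engelking, Holszty\'nski and Sikorski), and then takes $f$ to be a characteristic function; the cost of working with characteristic functions is the shift in the Lebesgue--Hausdorff correspondence at $\omega$, which is precisely where the exponent $(\alpha+1)^\bullet=\alpha+2$ for infinite $\alpha$ comes from. You instead exploit the fact that $x\mapsto\limsup_k x_k$ is a $1$-Lipschitz map on $\ell_\infty$ that is invariant under changing any single coordinate and admits the continuous section $t\mapsto(t,t,\dots)$; this lets you transport a classical function of exact Baire class $\alpha+1$ from the interval to an ssc function on $\ell_\infty$ of the same exact class. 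The steps you leave implicit are all standard: pre-composition with a continuous map preserves Baire class (a transfinite induction that needs no separability of $\ell_\infty$), and the Baire hierarchy on $(0,1)$ is strict at every countable level. What the paper's approach buys is the intrinsic construction of $\mathcal S$-open subsets of $\ell_\infty$ of arbitrarily high exact Borel class, which is of independent interest, together with a two-valued $f$. What yours buys is brevity and a strictly sharper conclusion: you obtain $f\in{\rm B}_{\alpha+1}\setminus{\rm B}_{\alpha}$ for every $\alpha<\omega_1$, including infinite $\alpha$, which in particular answers the paper's concluding Question (the case $\alpha=\omega$) in the affirmative. Given that, you should state your result as the sharper theorem rather than merely as a proof of the stated one, and record explicitly the two composition lemmas you rely on.
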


\begin{proof} We define inductively transfinite sequences $(A_\alpha)_{1\le\alpha<\omega_1}$ and $(B_\alpha)_{1\le\alpha<\omega_1}$ of subsets of $\ell_\infty$ in the following way. Put
\begin{gather*}
A_1=\{(x_n)_{n=1}^\infty\in\ell_\infty: \exists m\,\,\forall n\ge m\,\,\, x_n=0\}\quad\mbox{and}\quad B_1=\ell_\infty\setminus A_1.
\end{gather*}
Let $(T_n:n\in\omega)$ be a partition of $\omega$ onto infinite sets $T_n=\{t_{n0},t_{n1},\dots\}$, where $(t_{nm})_{m\in\omega}$ is a strictly increasing sequence of numbers $t_{nm}\in\omega$. We put
$$
\ell_\infty^{T_n}=\{(x_{t_{nm}})\in\ell_\infty:t_{nm}\in T_n\,\,\,\forall m\in\omega\}.
$$
For every $n\in\omega$ we denote by $A_1^n$ /$B_1^n$/ the copy of the set $A_1$ /$B_1$/, which is contained in the space $\ell_\infty^{T_n}$.
Assume that for some $\alpha> 1$ we have  already defined sequences $(A_\beta)_{1\le\beta<\alpha}$ and  $(B_\beta)_{1\le\beta<\alpha}$ (and their copies $(A_\beta^n)_{1\le\beta<\alpha}$ and  $(B_\beta^n)_{1\le\beta<\alpha}$ in $\ell_\infty^{T_n}$) of subsets of $\ell_\infty$. Now we put
\begin{gather*}
  A_\alpha=\left\{\begin{array}{ll}
                    \bigcup_{m=1}^\infty \bigcap_{n=m}^\infty \pi_{T_n}^{-1}(B_\beta^n), & \alpha=\beta+1, \\
                    \bigcup_{n=1}^\infty \pi_{T_n}^{-1}(A_{\beta_n}^n), & \alpha=\sup\beta_n,
                  \end{array}
  \right.
  \end{gather*}
  and
  \begin{gather*}
 B_\alpha=\ell_\infty\setminus  A_\alpha.
\end{gather*}

{\sc Claim 1.} {\it For every $\alpha\in[1,\omega_1)$ the following statements are true:
\begin{enumerate}
\item\label{claim1:1} the sets $A_\alpha$ and $B_\alpha$ are $\mathcal S$-open in $\ell_\infty$;

\item\label{claim1:2} for any $y=(y_n)_{n=1}^\infty\in\ell_\infty$ with $y_n\ne 0$ for all $n\in\omega$ we have
\begin{gather*}\label{gath:multiply}
  x=(x_n)_{n\in\omega}\in A_\alpha\,\,\Leftrightarrow\,\, z=(x_n\cdot y_n)_{n\in\omega}\in A_\alpha.
\end{gather*}
  \end{enumerate}}

{\it Proof of Claim 1.} {\it (\ref{claim1:1}).} Evidently, $A_1$ and $B_1$ are $\mathcal S$-open. Assume that for some $\alpha<\omega_1$ the claim is valid for all $\beta<\alpha$. Let $\alpha=\beta+1$ be an isolated ordinal. Take any $x\in A_\alpha$ and $y\in\sigma_1(x)$. Then  there exists $m\in\mathbb N$ such that $\pi_{T_n}(x)\in B_\beta^n$ for all $n\ge m$. Since $\pi_{T_n}(y)\in\sigma_1(\pi_{T_n}(x))$  and $B_\beta^n$ is $\mathcal S$-open, $\pi_{T_n}(y)\in B_\beta^n$. Therefore, $y\in A_\alpha$. We argue similarly in the case where $\alpha$ is a limit ordinal.

{\it (\ref{claim1:2}).} We fix  $y=(y_n)_{n=1}^\infty\in\ell_\infty$ such that $y_n\ne 0$ for all $n\in\mathbb N$. The statement is true for $\alpha=1$, since $A_1=\sigma(0)$. Assume that for some $\alpha<\omega_1$ the property is valid for all $\beta<\alpha$. Let $\alpha=\beta+1$ for some $\beta$. The inductive assumption implies that
\begin{gather*}
\begin{array}{ccc}
  x\in A_\alpha & \Longleftrightarrow  & \exists m\in\mathbb N\,\,\, \forall n\ge m\,\,\, \pi_{T_n}(x)\in B_\beta^n  \\
  \phantom{a} & \phantom{a} & \Updownarrow \\
  z\in A_\alpha & \Longleftrightarrow & \exists m\in\mathbb N\,\,\, \forall n\ge m\,\,\, \pi_{T_n}(z)\in B_\beta^n
\end{array}
\end{gather*}
We argue similarly in the case of limit $\alpha$. \hfill$\Box$

Consider the equivalent metric
$$
d(x,y)=\min\{d_\infty(x,y),1\}
$$
on the space $\ell_\infty$.

{\sc Claim 2.} {\it For every $\alpha\in [1,\omega_1)$ the following condition holds:
\begin{itemize}
  \item[$(*)$] for every set $C\subseteq (\ell_\infty,d)$ of the additive /multiplicative/ class $\alpha$ there exists a contracting mapping $f:(\ell_\infty,d)\to (\ell_\infty,d)$ with the Lipschitz constant $L=\frac 12$ such that
      \begin{gather}
        C=f^{-1}(A_\alpha) \quad /C=f^{-1}(B_\alpha)/,\\
        |\pi_n(f(x))|< 1\quad \forall x\in\ell_\infty\,\,\,\forall n\in\omega.
      \end{gather}
\end{itemize}}

{\it Proof of Claim 2.}  We will argue by the induction on $\alpha$. Let  $C$ be an arbitrary $F_\sigma$-subset of $(\ell_\infty,d)$. Then there exists an increasing sequence $(C_n)_{n\in\omega}$ of of closed subsets of $(\ell_\infty,d)$ such that $C=\bigcup_{n\in\omega} C_n$.
Consider a map $f:\ell_\infty\to\ell_\infty$, defined by the rule
$$
f(x)=\bigl(\tfrac 12 d(x,C_1),\dots,\tfrac{1}{2}d(x,C_n),\dots\bigr)
$$
for all $x\in\ell_\infty$.

We show that $C=f^{-1}(A_1)$. Take $x\in C$ and choose $m\in\omega$ such that $x\in C_n$ for all $n\ge m$. Then $d(x,C_n)=0$ and $\pi_n(f(x))=0$ for all $n\ge m$. Hence, $x$ belongs to the right-hand side of the equality. Now we prove the inverse inclusion. Let $x\in f^{-1}(A_1)$. Then there exists $m\in\omega$ such that $\pi_n(f(x))=0$ for all $n\ge m$. Consequently, $d(x,C_n)=0$ for all $n\ge m$. Since $C_n$ is closed, $x\in C_n$ for all $n\ge m$. Therefore, $x\in\bigcup_{n\in\omega}C_n=C$.

Since
$$
d(f(x),f(y))\le d_\infty(f(x),f(y))=\sup_{n\in\omega}|\tfrac 12 d(x,C_n)-\tfrac 12 d(y,C_n)|\le \tfrac 12 d(x,y)
$$
for all $x,y\in\ell_\infty$, the mapping $f$ is contracting with the Lipschitz constant $L=\frac 12$. Moreover,
$$
|\pi_n(f(x))|=\tfrac{1}{2}d(x,C_n)< 1
$$
for every $n\in\omega$.

Assume that for some $\alpha<\omega_1$ the condition $(*)$ is valid for all $\beta<\alpha$. Let $C\subseteq (\ell_\infty,d)$ be any set of the $\alpha$'th additive class. Take an increasing sequence of sets $C_n$ such that $C=\bigcup_{n\in\omega} C_n$, where  every $C_n$ belongs to the multiplicative class $\beta$ if $\alpha=\beta+1$,  and in the case $\alpha=\sup\beta_n$ we can assume that $C_n$ belongs to the additive class $\beta_n$ for every $n\in\omega$.  By the inductive assumption there exists a sequence $(f_n)_{n\in\omega}$ of contracting maps $f_n:(\ell_\infty,d)\to (\ell_\infty,d)$ with the Lipschitz constant $L=\frac 12$ such that
\begin{gather}
  C_n=\left\{\begin{array}{ll}
                f_n^{-1}(B_\beta),  & \alpha=\beta+1,\\
                f_n^{-1}(A_{\beta_n}), & \alpha=\sup\beta_n,
             \end{array}\right.\\
  |\pi_m(f_n(x))|< 1\quad\forall x\in\ell_\infty\,\,\,\forall n,m\in\omega.
\end{gather}
For every $k\in\omega$ we choose a unique pair $(n(k),m(k))\in\omega^2$ such that
$$
k=t_{n(k)m(k)}\in T_{n(k)}.
$$
For all $x\in\ell_\infty$ and $n,m\in\omega$ we put $f_{nm}(x)=\pi_m(f_n(x))$ and consider a map $f:\ell_\infty\to\ell_\infty$, defined by the rule
$$
f(x)=\bigl(\tfrac{1}{2} f_{n(1)m(1)}(x),\dots,\tfrac{1}{2} f_{n(k)m(k)}(x),\dots \bigr)
$$
for all $x\in\ell_\infty$.  The inequalities
\begin{gather*}
 |f_{nm}(x)-f_{nm}(y)|=|\pi_m(f_n(x))-\pi_m(f_n(y))|\le\\
 \le \sup_{m\in\omega}|\pi_m(f_n(x))-\pi_m(f_n(y))|=d_\infty(f_n(x),f_n(y))
  \end{gather*}
  and
\begin{gather*}
 |f_{nm}(x)-f_{nm}(y)|\le 2
\end{gather*}
imply that
\begin{gather*}
\frac{1}{2}|f_{nm}(x)-f_{nm}(y)|\le d(f_n(x),f_n(y))\le \frac 12 d(x,y)
\end{gather*}
for all $x,y\in\ell_\infty$ and $n,m\in\omega$. Then
\begin{gather*}
  d(f(x),f(y))\le d_\infty(f(x),f(y))=\\
  =\sup_{k\in\omega}|\tfrac 12 (f_{n(k)m(k)}(x)-f_{n(k)m(k)}(y))|\le \frac 12 d(x,y)
\end{gather*}
for all $x,y\in\ell_\infty$. Therefore, $f:(\ell_\infty,d)\to (\ell_\infty,d)$ is a Lipshitz map with the constant $L=\frac 12$.

It remains to show that $C=f^{-1}(A_\alpha)$. Assume that  $\alpha=\beta+1$ (we argue similarly if $\alpha$ is limit). Let us observe that
$x\in C$ if and only if there exists $m\in\omega$ such that $f_n(x)\in B_\beta$ for all $n\ge m$. Since
$$
\pi_{T_n}(f(x))=\Bigl(\frac {1}{2} \pi_k(f_n(x))\Bigr)_{k\in T_n},
$$
we have
\begin{gather*}
  f_n(x)\in B_\beta \,\,\Longleftrightarrow\,\, \pi_{T_n}(f(x))\in B_\beta^n.
\end{gather*}
by statement (\ref{claim1:2}) of Claim 1. Therefore, $C=f^{-1}(A_\alpha)$.
\hfill$\Box$

{\sc Claim 3.} {\it For every $\alpha\in [1,\omega_1)$ the set $A_\alpha$  belongs to the additive class $\alpha$  and does not belong to the multiplicative class $\alpha$ in $\ell_\infty$.}

{\it Proof of  Claim 3.} If $\alpha=1$, then
$$
A_1=\bigcup_{n\in\omega} \{x\in\ell_\infty: |\{k\in\omega:x_k\ne 0\}|\le n\}
$$
is an $F_\sigma$-subset of $\ell_\infty$, since every set $\{x\in\ell_\infty: |k\in\omega:x_k\ne 0|\le n\}$ is closed. Consequently, $B_1$ is $G_\delta$-subset of $\ell_\infty$. Suppose that for some $\alpha\ge 1$ the set $A_\beta$ /$B_\beta$/ belongs to the additive /multiplicative/ class $\beta$ in $\ell_\infty$ for every $\beta<\alpha$. Since every projection $\pi_{T_n}:\ell_\infty\to\ell_\infty^{T_n}$ is continuous, the set $A_\alpha$ belongs to the additive class $\alpha$ in $\ell_\infty$ and the set $B_\beta$ belongs to the multiplicative class $\alpha$ in $\ell_\infty$.

Fix $\alpha\in[1,\omega_1)$. In order to show that $A_\alpha$ does not belong to the $\alpha$'th multiplicative class we assume the contrary. Claim 2 implies that there exists a contraction  $f:(\ell_\infty,d)\to (\ell_\infty,d)$ such that $A_\alpha=f^{-1}(B_\alpha)$. By the Contraction Map Principle, there would be a fixed point for  $f$, which implies a contradiction. \hfill$\Box$

Now we are ready to construct a function $f$ from the statement of the theorem.  Let $\alpha\in[0,\omega_1)$ be fixed. If $\alpha=0$, then we put $A=c$, where $c$ is the subspace of $\ell_\infty$ consisting of all convergent sequences of real numbers.  If $\alpha>0$, then previous steps imply the existence of an $\mathcal S$-open set $A\subseteq \ell_\infty$ such that $A$ belongs to the $(\alpha)^\bullet$'th additive class and does not belong to the $(\alpha)^\bullet$'th multiplicative class. In any case for every $x\in \ell_\infty$ we put
\begin{gather*}
    f(x)=\Bigl\{\begin{array}{ll}
                              1, & x\in A, \\
                              0, & x\not\in A.
                            \end{array}
\end{gather*}

We prove that $f:\ell_\infty\to[0,1]$ is strongly separately continuous. Fix $\varepsilon>0$, $k\in\omega$ and $x=(x_n)_{n\in\omega}\in\ell_\infty$. We put $\delta=1$ and notice that for all $y\in B_\infty(x,\delta)$ we have
\begin{gather*}
  y=(y_1,\dots,y_k,\dots)\in A \,\,\Longleftrightarrow \,\, z=(y_1,\dots,y_{k-1},x_k,y_{k+1},\dots)\in A,
\end{gather*}
since $A$ is $\mathcal S$-open. Therefore,
\begin{gather*}
  |f(y)-f(z)|=0
\end{gather*}
for all $y\in B_\infty(x,\delta)$ and $z=(y_1,\dots,y_{k-1},x_k,y_{k+1},\dots)$. Hence, $f$ is strongly separately continuous at $x$ with respect to the $k$'th variable.

Notice that both $A$ and $X\setminus A$ are of the $(\alpha+1)^\bullet$'th additive class, that is, $A$ is ambiguous set of the $(\alpha+1)^{\bullet}$'th class in $\ell_\infty$. It is well-known  that the characteristic function of any ambiguous set of the class $\xi$ in any metric space belongs to the $\xi$'th  Baire class~\cite[\S 31]{Ku1} for any $\xi\in[1,\omega_1)$. Therefore, $f\in {\rm B}_{(\alpha+1)^{\bullet}}(\ell_\infty,[0,1])$.

If $\alpha=0$, then $f$ is discontinuous exactly on $A$ and hence $f\not\in {\rm B}_0(\ell_\infty,[0,1])$.

In case $\alpha>0$ we assume that $f\in{\rm B}_\alpha(\ell_\infty,[0,1])$. Then $f$ belongs to the $(\alpha)^\bullet$'th Borel class. Therefore, $A=f^{-1}(1)$ is the set of the $(\alpha)^\bullet$'th multiplicative class in   $\ell_\infty$, which contradicts to the choice of $A$.
\end{proof}

\begin{remark}{\rm
The existence of  an ssc function $f:\ell_\infty\to [0,1]$ which is not Baire measurable  was proved in \cite{KV:PIGC:2017}. The Baire classification of ssc functions defined on $\mathbb R^\omega$ was studied in~\cite{KaRAEX:2015}.}
\end{remark}

Theorem~\ref{thm:BaireClass} suggests the following question.
\begin{question}
  Does there exist  a strongly separately continuous function $f:\ell_\infty\to[0,1]$ such that $f\in{\rm B}_{\omega+1}\setminus {\rm B}_{\omega}$?
\end{question}


\begin{thebibliography}{99}
\bibitem{CSV} J.~\v{C}in\v{c}ura, T.~\v{S}al\'{a}t and T.~Visnyai, {\it On separately continuous functions $f:\ell^2\to\mathbb R$}, Acta Acad. Paedagog. Agriensis, XXXI (2004), 11--18.

\bibitem{Dzagnidze} O. Dzagnidze, {\it Separately continuous function in a new sense are continuous}, Real Anal. Exchange, {\bf 24} (1998-99), 695--702.

\bibitem{EHS} R. Engelking, W. Holszty\'{n}ski, R. Sikorski, {\it Some examples of Borel sets}, Colloq. Math., 15 (1966), 271--274.

\bibitem{KaRAEX:2015} O. Karlova, {\it On Baire classification of strongly separately continuous functions}, Real Anal. Exch., {\bf 40} (1) (2014/2015), 1--11.

\bibitem{KV:PIGC:2017} O. Karlova, T. Visnyai, {\it Some remarks concerning strongly separately continuous functions on spaces $\ell_p$ with $p\in[1,+\infty]$}, Proc. Int. Geom. Center, to appear.

\bibitem{KV:JMAA}	O. Karlova, T. Visnyai, {\it On strongly separately continuous functions on sequence spaces}, J. Math. Analysis and Appl., {\bf 439} (1) (2016), 296--306.

\bibitem{Ku1} K. Kuratowski, {\it Topology  I}, Academic Press, 1966.

\bibitem{TV} T.~Visnyai, {\it Strongly separately continuous and separately quasicontinuous functions \mbox{$f:\ell^2\to\mathbb R$}}, Real Anal. Exchange, {\bf 38} (2) (2013), 499--510.
\end{thebibliography}
\end{document}